\documentclass[12pt]{article}
\usepackage{a4}
\usepackage[english]{babel}

\usepackage{amsthm}
\usepackage{amsmath}
\usepackage{amsfonts}
\usepackage{amssymb}
\usepackage{graphicx}
\usepackage{enumitem}
\setlist[enumerate,1]{label=(\arabic*)}
\setlist{noitemsep}

\newtheorem{theorem}{Theorem}

\newtheorem{lemma}[theorem]{Lemma}
\newtheorem{coro}[theorem]{Corollary}
\newtheorem{claim}{Claim}

\newcommand*{\claimproof}{Proof of the Claim}
\newenvironment{proofcl}[1][\claimproof]{\begin{proof}[#1]
}{\end{proof}}

\newcommand{\sm}{\setminus}
\newcommand{\FF}{\mathcal F}

\newcommand{\vl}{l\kern-0.035cm\char39\kern-0.03cm}

\title{Trestles in the squares of graphs}

\author{Adam Kabela\thanks{Affiliation: Faculty of Informatics, Masaryk University, Brno, Czech Republic.
E-mail: kabela@fi.muni.cz.
Former affiliation:
Faculty of Applied Sciences, University of West Bohemia, Pilsen, Czech Republic.}
\and
Jakub Teska\thanks{Affiliation: Department of Mathematics and European Centre of Excellence NTIS,
Faculty of Applied Sciences, University of West Bohemia, Pilsen, Czech Republic. E-mail: teska@kma.zcu.cz}}

\date{}

\begin{document}
\maketitle

\begin{abstract}
We show that the square of every connected $S(K_{1,4})$-free graph
satisfying a matching condition
has a $2$-connected spanning subgraph of maxi\-mum degree at most~$3$.
Furthermore,
we characterise trees whose square
has a $2$-connected spanning subgraph of maximum degree at most~$k$.
This generalises the results on $S(K_{1,3})$-free graphs of
Henry and Vogler (1985) and Harary and Schwenk (1971), respectively.

\vspace{3mm}
\noindent 
{\bf Keywords:} squares of graphs, Hamiltonicity, trestles, forbidden subgraphs
\end{abstract}


In this note, we continue the long-established and thorough study
of Hamiltonian properties of the squares of graphs
(for instance, see~\cite{N, CEF}).

We recall that the \emph{square} of a graph $G$ is
the graph on the same vertex set as~$G$ in which
two vertices are adjacent if and only if their distance in $G$ is
either $1$ or~$2$, and we let $G^2$ denote this graph.
We recall that a \emph{$k$-trestle}
(sometimes called $k$-covering)
is a $2$-connected spanning subgraph of maximum degree at most~$k$.
Clearly, $2$-trestles are Hamilton cycles;
and $k$-trestles are viewed as an extension of the concept of Hamiltonicity
(for instance, see~\cite{JKRS} and the references therein).

We let $S(K_{1,k})$ denote the graph obtained from $K_{1,k}$
by subdividing each of its edges once (see Figure~\ref{figSK13}). 
Clearly, the square of $S(K_{1,k+1})$ has no $k$-trestle.
%
We recall that Neuman~\cite{N} (and also Harary and Schwenk~\cite{HS})
showed that for trees,
being $S(K_{1,3})$-free (and having at least $3$ vertices)
is a necessary and sufficient condition 
in relation to Hamiltonicity of the square.
Later, Henry and Vogler~\cite{HW} showed that
this condition is sufficient for all graphs
(and this result was strengthened by
Abderrezzak, Flandrin and Ryj\'{a}\v{c}ek~\cite{AFR}
who studied additional properties of induced copies of $S(K_{1,3})$).

\begin{figure}[ht]
    \centering
    \includegraphics[scale=0.6]{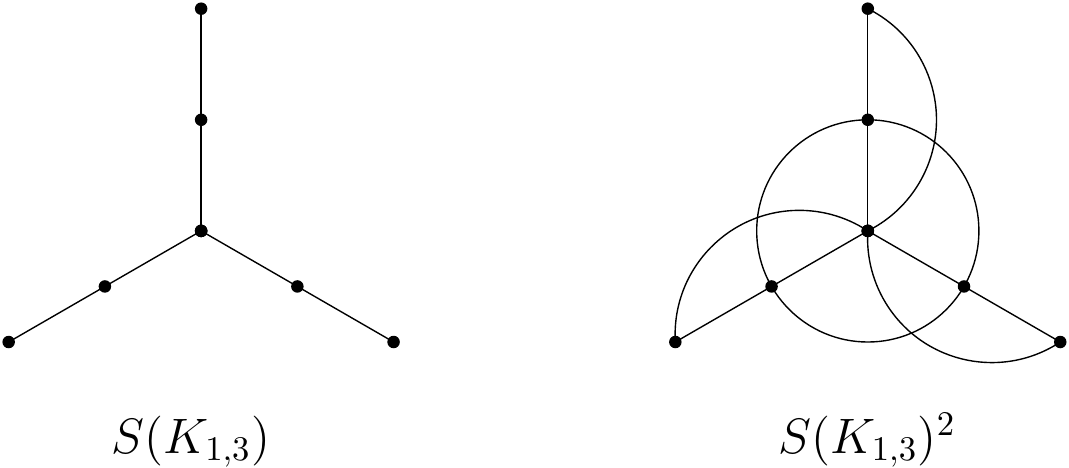}
    \caption{Graph $S(K_{1,3})$ and its square.}
    \label{figSK13}
\end{figure}

We investigate the squares of $S(K_{1,k+1})$-free graphs
and their $k$-trestles. For $k = 3$, we show the following.

\begin{theorem}
\label{main}
Let $G$ be a connected $S(K_{1,4})$-free graph (on at least~$3$ vertices).
Let $X$ be the set of all vertices $x$
such that $x$ is the centre of an induced copy of $S(K_{1,3})$ in $G$.
If $G$ has a matching of size $|X|$ whose every edge is incident with
precisely one vertex of $X$ and one vertex of $V(G) \sm X$,
then $G^2$ has a $3$-trestle such that
all non-matched vertices have degree $2$.
\end{theorem}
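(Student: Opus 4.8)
The plan is to argue by induction on $|X|$. The base case $|X|=0$ is exactly the theorem of Henry and Vogler: a connected $S(K_{1,3})$-free graph on at least three vertices has a Hamiltonian square, and a Hamilton cycle of $G^2$ is a $3$-trestle in which every vertex — all of them non-matched, since $M=\emptyset$ — has degree $2$. So the work is entirely in the inductive step. There I would fix a vertex $x\in X$ together with its matched partner $y$, where $xy\in M$ and $y\in V(G)\sm X$, and reduce to a graph with strictly fewer centres.

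Before the general step I would record the local picture at a centre, which explains the role of the matching. Consider first $G=S(K_{1,3})$ itself, with centre $x$ and arms $x\,a_i\,b_i$ for $i=1,2,3$. In $G^2$ each leaf $b_i$ is adjacent only to $x$ and $a_i$, so any $2$-connected spanning subgraph must contain both $x b_i$ and $a_i b_i$; this saturates $x$ at degree $3$ and then forces at least one $a_i$ to degree $3$ as well, because the triangle $a_1a_2a_3$ cannot give all three $a_i$ degree exactly $2$ once their edges to $x$ are forbidden (each $a_i$ needs a triangle edge, and three vertices cannot each use exactly one). Thus even in the smallest case some neighbour of $x$ outside $X$ must be allowed degree $3$, and that is precisely what the matched partner supplies. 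The same phenomenon recurs at each centre in a general $G$, and the $S(K_{1,4})$-freeness is what guarantees that the branching at $x$ can never demand more than this one extra edge.

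For the reduction I would exploit two monotonicity facts: deleting a vertex can only destroy induced copies of $S(K_{1,3})$, so $X(G')\subseteq X(G)$ for every induced subgraph $G'$, and $S(K_{1,4})$-freeness is hereditary. Using $S(K_{1,4})$-freeness to bound the branching at $x$, I would peel off one caterpillar-like branch $B$ hanging at $x$ so that in the reduced graph $G'=G-V(B)$ the vertex $x$ drops below the threshold for being an $S(K_{1,3})$-centre; then $x\notin X(G')$, no new centres appear (by the monotonicity above), $G'$ stays connected and $S(K_{1,4})$-free, and the restricted matching still satisfies the hypothesis. Applying the induction hypothesis to $G'$ yields a $3$-trestle $H'$ whose non-matched vertices have degree $2$. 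I would then splice $B$ back in: $B$ is a caterpillar, so $B^2$ carries a Hamilton path by the Harary--Schwenk construction, and I would insert this path by replacing a short segment of $H'$ at $x$ with a detour through $B$ and through $y$, built from edges of $G^2$ (distance at most $2$ in $G$). This gives $x$ and $y$ degree $3$, every vertex of $B$ degree $2$, and leaves all other degrees as in $H'$.

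The hard part will be this gluing, and in particular re-establishing $2$-connectivity across the join. Since $B$ meets the rest of $G$ only through $x$, the vertex $x$ is a cut vertex of $G$; the trestle avoids making it a cut vertex of $G^2$ because a neighbour of $x$ in $B$ and a neighbour of $x$ in $G'$ have $x$ as a common neighbour and are therefore adjacent in $G^2$, and this $G^2$-edge, together with the detour through the matched partner $y$, furnishes the second vertex-disjoint crossing between $B$ and the rest. Making all of this precise — choosing $B$ so that $x$ genuinely leaves $X$ while $B$ itself contains no centre, verifying that the matching condition survives on both sides, keeping the degree budget of $3$ at $x$ and $y$ and of $2$ everywhere else, and confirming $2$-connectivity at the splice (with a short separate treatment of the cases where $G'$ has fewer than three vertices) — is where the real effort lies; the traversal of $B$ and the degree count are then routine.
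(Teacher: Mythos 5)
The central gap is in your reduction step. Your induction on $|X|$ is anchored at a centre $x$, from which you peel a branch $B$ meeting the rest of $G$ only at $x$ --- but a centre of an induced $S(K_{1,3})$ in a general graph need not be a cutvertex, so no such $B$ need exist (e.g.\ when $x$ lies inside a $2$-connected block). Your sketch has no case for this; at minimum you would need Fleischner's theorem to dispose of $2$-connected $G$, which you never invoke. Even when $x$ is a cutvertex, the component hanging at $x$ is an arbitrary connected $S(K_{1,4})$-free graph, not a caterpillar: it can contain further centres, so $B^2$ need not carry a Hamilton path, and the splice giving \emph{every} vertex of $B$ degree $2$ is impossible in general --- your own local analysis of $S(K_{1,3})$ already shows that a centre inside $B$ forces degree $3$ at itself and at a neighbour. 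Finally, the deferred check that ``the matching condition survives on both sides'' hides a genuine obstruction rather than routine effort: matching edges may cross the cut (a centre surviving in $G'$ matched into $B$, or a centre of $B$ matched into $G'$), and then neither side inherits a matching satisfying the hypothesis, so the induction hypothesis simply does not apply to $G'$.

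It is instructive to compare with how the paper clears exactly these hurdles. It inducts on the number of vertices, not on $|X|$; Fleischner's theorem handles $2$-connected $G$ (then $G^2$ is Hamiltonian) and paths are handled directly, after which one may choose a cutvertex $c$ of degree at least $3$ --- crucially an arbitrary cutvertex, not a centre. The matching is split into $M_0$ (edges inside $N(c)\cup\{c\}$) and $M$, and a spanning tree is chosen to contain all edges at $c$ and all of $M$, so that edges of $M$ never cross the decomposition; centres $u_i$ whose partner lies outside their branch are treated by the ``engaged vertex'' device and the extra edges $e_i$. The recursively obtained trestles $Z_i$ of the branch graphs $H_i$ are then interconnected through a spanning linear forest of the graph $A$ induced by $N(c)\cup\{w_1,\dots,w_k\}$ in $G^2$; the existence of that forest rests on the Gallai--Milgram path-cover theorem together with the fact that $S(K_{1,4})$-freeness bounds the relevant independence number by $3$. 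This interconnection lemma and the matching bookkeeping are precisely the machinery your proposal lacks: your ``second disjoint crossing via $y$'' addresses a single branch, and does not restore $2$-connectivity when several branches hang at the same cutvertex and must all be threaded together within a degree budget of $3$.
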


In case $G$ is $S(K_{1,3})$-free, the obtained trestle
is, in fact, a Hamilton cycle
(in particular, Theorem~\ref{main} can be viewed
as a generalization of the result of~\cite{HW}).

For the study of $k$-trestles in the squares of $S(K_{1,k+1})$-free graphs,
we shall need an extension of the matching condition of Theorem~\ref{main}.
To this end,
we recall that the \emph{symmetric orientation} of a graph
is the digraph obtained by replacing every edge
by a pair of antiparallel directed arcs,
and we consider particular assignments of integers
to arcs of the symmetric orientation.
Restricting to the squares of trees, we show the following
(which generalizes the result of~\cite{HS}).

\begin{theorem}
\label{tree}
Let $k$ be an integer greater than~$1$,
and $T$ be a tree (on at least~$3$ vertices),
and let $n(v)$ denote the number of non-leaves adjacent to vertex $v$ in~$T$.
Take the symmetric orientation of~$T$
and an assignment of non-negative integers to its arcs,
and let $i(v)$ denote the sum of the integers over all arcs ending in vertex~$v$
and $o(v)$ denote the sum over all arcs starting in~$v$.
The following statements are equivalent. 
\begin{enumerate}
\item
$T^2$ has a $k$-trestle.
\item
Every vertex $v$ of~$T$ satisfies $n(v) \leq k$,
and there exists a considered assignment such that
$i(v) = \max \{0, n(v) - 2 \}$ and
$o(v) \leq k - n(v)$
for every vertex $v$ of~$T$.
\item
$T^2$ has a $k$-trestle whose every vertex $v$ 
has degree $o(v) + \max \{2, n(v)\} $. 
\end{enumerate}

\end{theorem}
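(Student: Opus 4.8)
The plan is to establish the cyclic chain $(3)\Rightarrow(1)\Rightarrow(2)\Rightarrow(3)$, of which $(3)\Rightarrow(1)$ is immediate: the constraints in~(2)--(3) force $o(v)+\max\{2,n(v)\}\le k$, so a trestle realizing these degrees is a $k$-trestle. Throughout I would work with the \emph{core} $T'$, the subtree of $T$ induced on the non-leaf vertices, and record the two identities $n(v)=\deg_{T'}(v)$ for every non-leaf $v$ and $n(v)=1$ for every leaf of $T$ (a leaf's neighbour is never a leaf, as $T$ is connected on at least $3$ vertices). A degree count then gives $\sum_v\max\{0,n(v)-2\}=\ell'-2$, where $\ell'$ is the number of leaves of $T'$ (assuming $T'$ has an edge); this is the total demand the assignment must transport, and it foreshadows that the trestle of~(3) has cyclomatic number $\ell'-1$.

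For $(1)\Rightarrow(2)$ the first ingredient is a cut-vertex lemma. Fix a non-leaf $v$ with non-leaf neighbours $w_1,\dots,w_m$ ($m=n(v)$) and let $C_i$ be the component of $T-v$ containing $w_i$; since $w_i$ is a non-leaf, $C_i$ contains a child of $w_i$, and every edge of $T^2$ leaving $C_i$ either emanates from $w_i$ or is incident with $v$. Hence, were no edge of a trestle $H$ to join $v$ to $C_i$, all edges of $H$ leaving $C_i$ would pass through $w_i$, making $w_i$ a cut-vertex and contradicting $2$-connectivity. Summing over the branches gives $\deg_H(v)\ge m$, and with $\deg_H(v)\ge2$ this yields $\deg_H(v)\ge\max\{2,n(v)\}$ and in particular $n(v)\le k$. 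It remains to produce a valid assignment, which I would phrase as a transportation on the tree with demands $i(v)=\max\{0,n(v)-2\}$ and out-capacities $k-n(v)$; feasibility is checked by the Hall/Gale--Hoffman condition on each edge-cut, and the point is that the existence of $H$ certifies these inequalities, since each subtree hanging off an edge already spends, inside $H$, enough edges at its boundary to bound its accumulated demand against the capacity crossing that edge. Making this accounting exact, so that the in-sums hit $\max\{0,n(v)-2\}$ on the nose, is the technical heart of this direction.

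For $(2)\Rightarrow(3)$ I would argue by induction on $\ell'$, constructing a $2$-connected spanning subgraph of $T^2$ with the prescribed degrees. In the base case $\ell'\le2$ the core is a path, $T$ is a caterpillar, every $o(v)=0$, and the classical caterpillar construction produces a Hamilton cycle of $T^2$ (a $2$-trestle, hence a $k$-trestle) with every degree equal to $2=\max\{2,n(v)\}$. For the inductive step, pick a leaf $z$ of $T'$ and follow the path in $T'$ from $z$ to the nearest branch vertex $b$; deleting this pendant branch of the core (together with the leaves of $T$ attached along it) yields a tree $T^-$ with core leaf-count $\ell'-1$, on which $n(b)$—and hence the demand $i(b)$—drops by exactly one, so the restricted assignment is again valid. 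By induction $(T^-)^2$ has the desired trestle; I then reinsert the pruned branch as an \emph{ear}, namely a path through the deleted vertices that returns to $b$ via a distance-two shortcut of $T^2$, inserting each attached leaf of $T$ by subdividing an incident edge. This adds exactly one unit to $\deg_H(b)$ (accounting for the extra branch at $b$) while all other prescribed degrees are met, and since an ear is added between two points of a $2$-connected graph, $2$-connectivity is preserved.

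The main obstacle I expect is this last construction: one must choose the ear and the placement of the attached leaves so that the degree at every vertex comes out \emph{exactly} $o(v)+\max\{2,n(v)\}$—not merely at most $k$—while keeping the subgraph $2$-connected, and one must match the ear-endpoints precisely to the arcs carrying the assignment's units so that the induction's bookkeeping stays consistent. The symmetric-orientation assignment is exactly the device that records, for each vertex, how many ears are anchored there ($o(v)$) versus how many of its branches are reconnected through it ($i(v)$), and verifying that a feasible assignment can always be realized by such a system of ears—and, conversely in $(1)\Rightarrow(2)$, that any trestle induces a feasible assignment—is where the real work lies.
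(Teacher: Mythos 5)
Your outline contains the right easy pieces, but both hard directions have genuine gaps. For $(1)\Rightarrow(2)$, your cut-vertex lemma is correct and gives $\deg_H(v)\ge\max\{2,n(v)\}$, hence $n(v)\le k$ --- but that is only the first half of (2), and the assignment is never constructed. The transportation framing you propose is also misdirected: the integers sit on single arcs, so a unit of supply at $u$ can serve only demands at neighbours of $u$; feasibility is therefore a Hall-type condition over arbitrary vertex subsets and their neighbourhoods, not over edge-cuts of the tree, and nothing in your sketch extracts these inequalities from the trestle $H$ (exactness, by contrast, is free: any oversupplying solution can be reduced arc by arc). The paper instead proves a strictly stronger local fact that your lemma misses: for each vertex $x$ with neighbourhood $U$, the subgraph $N$ of the trestle induced on $U$ is \emph{connected}, because every edge of $T^2$ between distinct components of $T-x$ has both ends in $U$, while $Z-x$ is connected. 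Hence $N$ has at least $|U|-1$ edges, and since $2|U|-2\ge |U|+n(x)-2$ one can assign to each arc $ux$ an integer smaller than $\deg_N(u)$ with total exactly $\max\{0,n(x)-2\}$; because a tree has no $4$-cycles, the edge sets counted at a fixed $v$ are disjoint over different $x$, so $\deg_Z(v)\le k$ yields $o(v)\le k-n(v)$. Without this connectivity fact, the ``accounting'' you defer is the entire content of the direction.

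For $(2)\Rightarrow(3)$, the prune-and-ear induction fails in a way that goes beyond the bookkeeping you flag. Besides (a) the restricted assignment on $T^-$ not satisfying $i(b)=\max\{0,n(b)-3\}$ unless the arc from the pruned branch into $b$ carries exactly the right value, and (c) the second ear-endpoint raising the degree of a vertex whose prescribed degree is unchanged, there is (b): a path-ear forces degree $2$ at every internal vertex, whereas a pruned branch vertex can have $o(v)>0$ --- e.g.\ the arc $v_1b$ from the branch vertex adjacent to $b$ may carry a unit feeding the demand $i(b)\ge 1$, making the prescribed degree of $v_1$ at least $3$ --- so your claim that ``all other prescribed degrees are met'' is false in general; realizing such units requires extra chords from $v_1$ to other neighbours of $b$ and a global re-routing of the assignment, for which your induction has no mechanism (and (a) and (c) interact: decrementing $a(u^*b)$ to fix $u^*$ forces $a(v_1b)=0$, so when $a(v_1b)>0$ you are stuck both ways). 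The paper avoids all three problems at once with a different decomposition: it takes a minimal counterexample (first in $k$, then in $|V(T)|$; the all-$n(v)\le2$ caterpillar case follows from Theorem~\ref{main}, as in your base case), picks a vertex $x$ with $n(x)\ge3$, and splits $T$ into trees $T_j$, one per non-leaf neighbour $u_j$, each consisting of the branch at $u_j$ plus $x$ and a dummy leaf $y_j$, with the boundary arcs $u_jx$, $xy_j$, $y_jx$ zeroed so the restricted assignment is valid verbatim. It then glues the trestles $Z_j-y_j$ at $x$ via an auxiliary tree $T_U$ on \emph{all} neighbours of $x$, whose prescribed degrees $a(u_jx)+1$ (non-leaves) and $a(u_jx)+2$ (leaves) sum to $2\ell-2$ precisely because $i(x)=n(x)-2$; this single device realizes every unit entering $x$ simultaneously and produces the exact degrees of (3) without ear surgery. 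To salvage your route you would essentially have to rediscover it.
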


We remark that the assignment condition of statement (2) can be checked easily
(for instance, by using an auxiliary flow network).
For the case $k=2$ of Theorem~\ref{tree},
this condition is satisfied if and only if
every assigned integer is~$0$.
Considering the case $k=3$
and the set of all arcs whose assigned integer is positive,
we note that this set corresponds to a matching described in Theorem~\ref{main}
(in particular, the result of Theorem~\ref{main} is,
in some sense, sharp).

\begin{figure}[ht]
    \centering
    \includegraphics[scale=0.6]{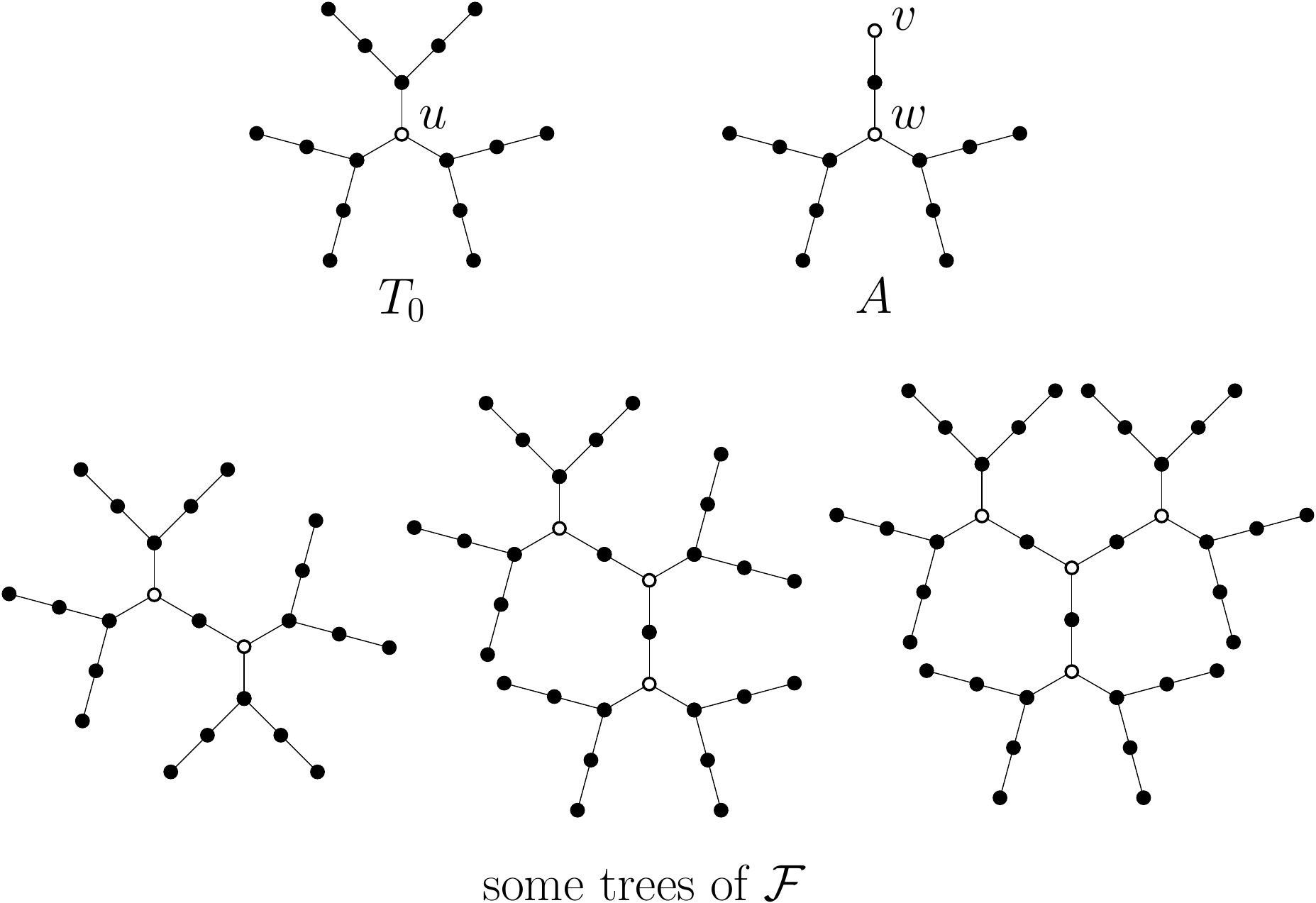}
    \caption{Trees $T_0$ and $A$ (top) and some examples of trees of $\FF$ (bottom).
    The special vertices are depicted as white.}
    \label{f:subtrees}
\end{figure}

In addition,
we note that the constraints preventing a tree from having a $3$-trestle in its square
can also be easily described in terms of subtrees.
We let $T_0$ and $A$ be the trees depicted in Figure~\ref{f:subtrees},
and we call vertex $u$ of $T_0$ and vertices $v$ and $w$ of $A$ \emph{special}.
We define an infinite family $\FF$ of trees (some of whose vertices are special) as follows.
A tree belongs to $\FF$ if and only if either it is $T_0$ or it can be obtained from a tree of $\FF$
by removing five of its vertices so that the resulting graph is a tree and its special vertex has degree $2$,
and by identifying this special vertex of the resulting graph with vertex $v$ of $A$.
We define the special vertices of the obtained tree in the natural way.
Some examples of trees of $\FF$ can be found depicted in Figure~\ref{f:subtrees}.
We use Theorems~\ref{main} and~\ref{tree}  
and the classical result of Hall~\cite{H} on matchings in bipartite graphs,
and we show the following.

\begin{coro}\label{coro}
Let $T$ be a tree (on at least~$3$ vertices)
and $\FF$ be the family of trees defined above.
Then $T^2$ has a $3$-trestle if and only if
$T$ is $S(K_{1,4})$-free and
for every subtree of $T$ isomorphic to a tree of $\FF$,
at least one special vertex of the subtree has degree greater than $3$ in $T$.
\end{coro}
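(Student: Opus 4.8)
The plan is to combine Theorems~\ref{main} and~\ref{tree} (in the case $k=3$) into a single matching condition, dispatch that condition via Hall's theorem~\cite{H}, and then recognise the failure of Hall's condition as exactly the presence of a bad subtree from~$\FF$. For a tree, $T$ is $S(K_{1,4})$-free precisely when $n(v)\le 3$ for every vertex, and a vertex is the centre of an induced $S(K_{1,3})$ if and only if $n(v)\ge 3$; so under the $S(K_{1,4})$-free hypothesis we have $X=\{v : n(v)=3\}$. Writing out statement~(2) of Theorem~\ref{tree} for $k=3$, a valid assignment must satisfy $i(v)=1,o(v)=0$ when $n(v)=3$ and $i(v)=0,o(v)\le 3-n(v)$ otherwise. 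Reading off the arcs of positive value: a vertex with $n=3$ only receives, a vertex with $n\le 2$ only sends, and a sender~$u$ can send only along arcs whose head is one of its non-leaf neighbours while each receiver absorbs exactly one unit, so in fact $o(u)\le 1$. Hence the positive arcs form a matching that saturates~$X$ and joins~$X$ to $V(T)\sm X$, and conversely any such matching yields a valid assignment. Thus, assuming $T$ is $S(K_{1,4})$-free, $T^2$ has a $3$-trestle (by Theorem~\ref{main}, equivalently by statement~(3) of Theorem~\ref{tree}) if and only if the bipartite graph~$B$ with parts~$X$ and $V(T)\sm X$ and edge set the edges of~$T$ between them has a matching saturating~$X$; by Hall's theorem this is equivalent to $|N_B(S)|\ge|S|$ for every $S\subseteq X$.

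It now suffices to show, still under the $S(K_{1,4})$-free assumption, that Hall's condition for~$B$ fails if and only if $T$ contains a subtree isomorphic to some tree of~$\FF$ all of whose special vertices have degree at most~$3$ in~$T$. For the ``if'' direction I would use the recursive construction of~$\FF$ to equip each tree $F\in\FF$ with a distinguished set~$S_F$ of its $n=3$ vertices. An induction on the construction then shows that, provided every special vertex has degree at most~$3$ in~$T$, the vertex has no leaf neighbour beyond those already in~$F$ and hence no private eligible partner, so that $N_B(S_F)$ consists exactly of the shared degree-two partners internal to~$F$ and $|N_B(S_F)|=|S_F|-1$; this violates Hall's condition. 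The base case is~$T_0$, where $S_{T_0}$ is the single special vertex~$u$, whose three neighbours are forced into~$X$ by the pendant structure of~$T_0$ and which therefore has empty neighbourhood in~$B$.

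For the converse I would start from a minimal violator $S\subseteq X$. One verifies the standard reductions: we may take $B[S\cup N_B(S)]$ to be a tree of deficiency one in which every vertex of $N_B(S)$ is shared by exactly two members of~$S$, and in which no member of~$S$ has a leaf neighbour (a leaf neighbour would be a private partner, giving a smaller violator). Recovering the surrounding subtree of~$T$, each $v\in S$ has its three non-leaf neighbours realised by its shared partners in $N_B(S)$ together with its non-eligible neighbours, which lie in~$X$ and are in turn forced to carry further non-leaf neighbours with pendant leaves so as to have $n=3$. The proof then proceeds by induction on~$|S|$ along the very recursion defining~$\FF$: when $|S|=1$ the recovered subtree is~$T_0$; when $|S|>1$ I would isolate a leaf of the (contracted) tree carried by~$S$, identify the five vertices it contributes beyond its shared partner with a copy of~$A$, delete them to obtain a smaller minimal violator whose subtree lies in~$\FF$ by induction, and re-attach~$A$ at~$v$ to land back in~$\FF$. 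Throughout, the special vertices correspond to $n=3$ vertices with no leaf neighbour, so they have degree exactly~$3$ in~$T$.

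Hall's theorem enters only as the black-box equivalence of the first paragraph; the genuine work is the structural induction of the third. The main obstacle is verifying that every minimal violator decomposes along the $\FF$-recursion by peeling off exactly one copy of~$A$---so that the removal of ``five vertices'' in the definition of~$\FF$ matches the removal of a pendant shared-partner gadget---and checking that the special vertices of~$\FF$ are precisely those vertices at which an extra neighbour in~$T$, necessarily a leaf since~$T$ is $S(K_{1,4})$-free, would create a private partner and destroy the deficiency. Keeping the deficiency equal to one across the induction, and the bookkeeping of which forced neighbours land in~$X$ and which in $V(T)\sm X$, are the delicate points; once these are in place the equivalence with the subtree description of Corollary~\ref{coro} follows.
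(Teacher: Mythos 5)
Your overall architecture is exactly the paper's: translate statement (2) of Theorem~\ref{tree} for $k=3$ into the matching condition of Theorem~\ref{main} (your positive-arc analysis, including the deduction $o(u)\le 1$, is correct and is precisely the remark the paper itself makes after Theorem~\ref{tree}), pass to the red/black bipartite graph $B$ and Hall's theorem, get the forward direction by counting black neighbours inside an $\FF$-subtree, and get the converse by showing that a minimal Hall violator is exactly the set of special vertices of an $\FF$-subtree, whence $S(K_{1,4})$-freeness together with the absence of leaf neighbours forces degree exactly $3$ on every special vertex. The paper compresses that last structural step into a single ``we observe that the minimality implies\dots''; you attempt to expand it, and that expansion is where the one genuine problem sits. (A harmless slip elsewhere: $N_B(S_F)$ is only \emph{contained in} the set of shared internal partners --- such a partner $x$ may be red in $T$ if it has an extra non-leaf neighbour outside the subtree --- so you get $|N_B(S_F)|\le |S_F|-1$, which is all you need.)

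The failing step is ``delete [the pendant gadget of a leaf special vertex $w$] to obtain a smaller minimal violator.'' By the minimality of $S$, every proper subset of $S$ satisfies Hall's condition in $T$; concretely, after removing $w$, its unique shared partner $x$ is still a black neighbour of the adjacent special vertex, so $|N_B(S\sm\{w\})| = |S|-1 = |S\sm\{w\}|$ and $S\sm\{w\}$ is \emph{not} a violator in $T$. The induction cannot be run inside $T$ as you describe. You must either modify the host tree --- delete the pendant gadget consisting of $x$, $w$ and $w$'s two branches, which is $A$ minus $v$ (that is, $|A|-1$ vertices, not five: the ``five vertices'' in the definition of $\FF$ are the pendant branch that must be \emph{re-attached} at $v$ to restore $n(v)=3$ and keep $v$ red in the reduced instance, exactly reversing the $\FF$-recursion) --- or avoid surgery altogether and induct on the contracted structure: the tree on $S$ whose edges are the shared partners, with each $s\in S$ carrying $3-d(s)$ red branch-heads, showing that every such configuration is generated by the recursion defining $\FF$. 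Your phrase ``re-attach $A$ at $v$'' gestures at the right reversal but in the wrong order, and your honest flagging of this as ``the main obstacle'' is accurate: as written the reduction step contradicts the minimality you start from, so the third paragraph does not yet constitute a proof of the step the paper disposes of by observation.
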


The proofs of Theorems~\ref{main} and~\ref{tree} and Corollary~\ref{coro} 
are included below.
In~the proof of Theorem~\ref{main}, we extend the idea of~\cite{HW}.
We shall use
the result of Fleischner~\cite{F} on the squares of $2$-connected graphs,
and the following lemma.
We recall that a \emph{linear forest} is a graph whose every component is a path
(we view a vertex of degree $0$ as a trivial path),
and the \emph{independence} of a graph (digraph)
is the size of its maximum independent set.

\begin{lemma}
\label{linearForest}
For every independent set $I$ of a graph of independence $k$,
there exists a spanning linear forest
such that it has at most $k$ components each containing at most one vertex of $I$
and every vertex of $I$ is of degree at most~$1$.  
\end{lemma}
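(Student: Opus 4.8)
The plan is to reduce the statement to the classical theorem of Gallai and Milgram, which asserts that every digraph admits a partition of its vertices into at most $k'$ directed paths, where $k'$ is the independence of the digraph. The idea is to orient the edges of the graph so that the vertices of $I$ are \emph{forced} to be the starting vertices of the paths of such a partition; reading the resulting directed paths as undirected paths then yields the desired spanning linear forest. This single device is what lets all three required properties be obtained simultaneously.

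Concretely, I would first orient the graph as follows. Since $I$ is independent, no edge joins two vertices of $I$; hence I may direct every edge incident with a vertex of $I$ away from that vertex, and direct the remaining edges arbitrarily. Let $D$ denote the resulting orientation. Because $D$ is an orientation of the original graph, two vertices fail to be adjacent exactly when no arc joins them in $D$, so the independence of $D$ equals the independence $k$ of the graph.

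Next I would apply the theorem of Gallai and Milgram to $D$ to obtain a partition of the vertex set into at most $k$ directed paths. Forgetting the orientations, this partition is a spanning linear forest with at most $k$ components, which establishes the first requirement. It then remains to verify the two conditions involving $I$, and here the orientation does all the work: every vertex $w \in I$ has in-degree $0$ in $D$, so $w$ can be neither an internal nor a terminal vertex of any directed path (both require an incoming arc) and must therefore be the initial vertex of its path. Consequently $w$ is an endpoint of its path and so has degree at most $1$ in the linear forest, and no path can contain two vertices of $I$ since a directed path has a unique initial vertex; this gives the remaining two requirements.

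The only genuine obstacle is spotting the right reduction: once one sees that making the vertices of $I$ sources forces them, all at once, to be path-endpoints and to lie on pairwise distinct paths, the three conditions fall out together. The points that need care are checking that the orientation is well defined — which is precisely where the independence of $I$ is used, to rule out conflicting forced directions — and confirming that the independence of $D$ does not exceed that of the graph, so that the bound $k$ on the number of paths is preserved. If one prefers a self-contained treatment to invoking Gallai and Milgram, one can instead reprove the needed bound for $D$ by the standard inductive argument on a path partition of $D$ minimising the number of paths; but citing the theorem keeps the proof short.
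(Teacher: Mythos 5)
Your proof is correct and is essentially the same as the paper's: both reduce to Gallai--Milgram via an orientation that makes the vertices of $I$ extremal on their paths, the only (immaterial) difference being that you orient edges away from $I$ (making its vertices sources and hence initial vertices), while the paper orients them towards $I$ (making them sinks and hence terminal vertices).
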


We view Lemma~\ref{linearForest}
as a corollary of the following result of Gallai and Milgram~\cite[Satz~3.1]{GM}.

\begin{theorem}
\label{pathCover}
For every digraph of independence $k$, its vertex set
can be covered by at most $k$ vertex-disjoint paths (possibly trivial).
\end{theorem}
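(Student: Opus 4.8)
The plan is to prove the following strengthening, from which the statement is immediate: every digraph $D$ admits a \emph{path partition} (a set of vertex-disjoint directed paths, trivial paths allowed, covering $V(D)$) whose set of \emph{terminal} vertices --- the last vertex of each path --- forms an independent set. Granting this, the terminal vertices are pairwise non-adjacent and there is exactly one per path, so the number of paths equals the size of this independent set, which is at most the independence $k$; this is exactly the asserted cover by at most $k$ vertex-disjoint paths.

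I would prove the strengthening by induction on $|V(D)|$, the one-vertex case being trivial. For the inductive step I would fix a path partition $\mathcal{P}$ of $D$ using the fewest paths, write $T$ for its set of terminal vertices, and aim to show that $T$ is independent. Suppose it is not; then two distinct terminals are joined by an arc, say $a \to b$, where $a$ is the last vertex of a path $P$ and $b$ is the last vertex of a path $Q \neq P$.

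The core of the argument is an exchange (rerouting) move: detach $b$ from $Q$ and append it to $P$ along the arc $a \to b$. If $Q$ is the trivial path $\{b\}$, this merges two paths into one and strictly reduces the number of paths, contradicting the minimality of $\mathcal{P}$; hence $Q$ is nontrivial. In that case the predecessor $c$ of $b$ on $Q$ becomes a new terminal, yielding a partition with the same number of paths but the modified terminal set $(T \setminus \{a\}) \cup \{c\}$.

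The main obstacle is precisely this ``lateral'' case: a single exchange replaces one terminal by another without reducing the count, so I must ensure the procedure actually reaches an independent terminal set rather than cycling indefinitely. I would resolve this by sharpening the extremal choice --- taking $\mathcal{P}$ so that, among all minimum path partitions, a suitable secondary quantity (for instance, an inclusion-minimal or positionally minimal terminal set) is minimized --- and showing that each lateral exchange strictly decreases this quantity; alternatively, by deleting the terminal $b$, applying the induction hypothesis to $D - b$, and reinserting $b$ via the arc $a \to b$ while using minimality to keep the terminal set independent. Once the exchange process is shown to terminate at an independent terminal set, the induction closes and the bound on the number of covering paths follows.
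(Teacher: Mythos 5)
The paper does not actually prove this statement itself---it is the Gallai--Milgram theorem, which the paper cites from~\cite{GM}---but your proposal has a genuine gap, and it sits exactly where you located the ``main obstacle'': the strengthening you set out to prove is false. Consider the digraph with vertices $x, y, z$ and arcs $x \to y$ and $z \to y$. Its only path partitions are $\{x \to y,\ z\}$, $\{z \to y,\ x\}$, and the three trivial paths; in every case the terminal set contains $y$ together with an in-neighbour of $y$, so \emph{no} path partition of this digraph has an independent terminal set, even though the theorem holds for it (two paths suffice, and the independence is $2$, witnessed by $\{x,z\}$). Consequently no secondary extremal quantity can force your exchange process to terminate at an independent terminal set: in this example the lateral exchange simply alternates between the two nontrivial partitions forever, and your fallback (delete the terminal $b$, apply induction to $D-b$, reinsert $b$ along $a \to b$) fails for the same reason---after reinsertion the terminal set is again dependent, necessarily so, because no valid target exists.

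The repair is to weaken the invariant: what Gallai and Milgram actually establish is that every digraph has a path partition $\mathcal{P}$ together with an independent set containing exactly one vertex from \emph{each} path of $\mathcal{P}$---the representative need not be the terminal vertex. (In the counterexample, the partition $\{x \to y,\ z\}$ carries the independent set $\{x, z\}$.) The induction then runs close to your fallback: choose $\mathcal{P}$ whose terminal set is minimal with respect to inclusion; if the terminals are independent you are done, since they themselves form such a set. Otherwise, given an arc $a \to b$ between terminals, your observation that the path ending at $b$ must be nontrivial is correct (appending $b$ to the path ending at $a$ would otherwise shrink the terminal set); one then deletes $b$, verifies that the truncated partition has an inclusion-minimal terminal set in $D - b$ (any smaller terminal set in $D - b$ could be augmented by $b$, appended after $a$ or after the predecessor of $b$ or as a trivial path, contradicting minimality in $D$), and applies the induction hypothesis to get an independent set meeting every path of the truncated partition, hence every path of $\mathcal{P}$. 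Since this set contains one vertex per path, the number of paths is at most the independence $k$, exactly as in your first paragraph. So your mechanics---extremal choice, the trivial-path case, deletion of $b$---are the right ingredients, but they must be aimed at the one-representative-per-path statement rather than at independence of the terminals.
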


\begin{proof}[Proof of Lemma~\ref{linearForest}]
We let $G$ denote the given graph, 
and $D$ be a digraph obtained from $G$ by replacing every edge
with a directed arc as follows.
For every vertex of $I$, all arcs incident with this vertex are oriented towards it;
and the orientation of the remaining arcs is chosen arbitrarily.

We consider a path cover of $D$ given by Theorem~\ref{pathCover},
and we note that it consists of at most $k$ paths and every vertex of $I$ is an end of some of them.
We conclude that this path cover of $D$ translates into a desired subgraph of $G$.
\end{proof}

Finally, we prove Theorems~\ref{main} and~\ref{tree} and Corollary~\ref{coro}.

\begin{proof}[Proof of Theorem~\ref{main}]
We note that the statement is satisfied for graphs on at most four vertices.
We suppose that it is satisfied for graphs which have fewer vertices than $G$,
and we show it for $G$.

We recall that if $G$ is $2$-connected, then $G^2$ is Hamiltonian by~\cite{F}.
Furthermore, we note that if $G$ is a path, then $G^2$ is Hamiltonian.
Consequently, we can assume that $G$ has a cutvertex, say $c$, of degree at least $3$.

We let $N(c)$ denote the set of all neighbours of $c$ in $G$.
We consider a matching satisfying the assumptions of the theorem,
and we let $M_0$ be the set of all edges of the matching
whose both ends belong to $N(c) \cup \{ c \}$,
and $M$ be the set of all remaining edges of the matching.
We let $T$ be a spanning tree of $G$
containing all edges incident with $c$ and all edges of~$M$
(clearly, such a spanning tree exists since $G$ is connected
and the fixed edges give a forest).
We note that the graph $T - c$ has at least three components
(since $c$ is of degree at least $3$ in $G$),
and every component of $T - c$
contains precisely one vertex of $N(c)$.

In addition,
we can assume that at least one component of $T - c$ is non-trivial 
(otherwise $G^2$ is a complete graph, and thus Hamiltonian).
We consider all non-trivial components of $T - c$,
and we let $V_1, \dots, V_k$ denote their vertex sets,
and $u_i$ denote the neighbour of $c$ in $V_i$.

For every $i=1, \dots, k$,
we consider the subgraph of $G$ induced by $V_i \cup \{c\}$,
and we extend it by adding an auxiliary vertex $y_i$ and the edge $c y_i$;
and we let $H_i$ denote the resulting graph.
(We view $y_i$ as an extra vertex not belonging to~$V(G)$.
On the other hand, $H_i$ can be viewed as an induced subgraph of $G$
since $c$ is a cutvertex in $G$.) 
In addition, we let $M_i$ be the restriction of $M$
consisting of all edges of $M$
whose one end is a centre of induced $S(K_{1,3})$ in $H_i$;
and if $u_i$ is a centre of induced $S(K_{1,3})$ in $H_i$
and the vertex matched with $u_i$ does not belong to $V_i$,
then we extend $M_i$ by adding the edge $cu_i$.
We show the following.
\begin{claim} 
\label{c1}
For every $i=1, \dots, k$,
the graph $H_i^2$ has a $3$-trestle
such that every vertex which is not covered by $M_i$ has degree~$2$.
\end{claim}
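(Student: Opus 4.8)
The plan is to prove Claim~\ref{c1} by applying the induction hypothesis of Theorem~\ref{main} to $H_i$, using $M_i$ as the required matching and letting $X_i$ denote the set of all vertices that are centres of an induced copy of $S(K_{1,3})$ in $H_i$. Since $T - c$ has at least three components and $V_i$ is non-trivial, the graph $H_i$ has at least four vertices and strictly fewer vertices than $G$, and it is clearly connected. To see that $H_i$ is $S(K_{1,4})$-free, I would note that $H_i - y_i$ is an induced subgraph of $G$ and hence $S(K_{1,4})$-free, and then check that attaching the pendant $y_i$ at $c$ creates no new induced copy of $S(K_{1,4})$.

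The key step, and the main obstacle, is to relate $X_i$ to $X$. Here I would use that every edge of $G$ incident with $c$ lies in $T$, so $c$ has exactly the two neighbours $u_i$ and $y_i$ in $H_i$ and exactly one neighbour in each of the other components; in particular $c$ and $y_i$ have degree at most $2$ in $H_i$ and lie outside $X_i$. For $x \in V_i \sm \{u_i\}$, every vertex within distance $2$ of $x$ lies in $V_i \cup \{c\}$, which induces the same graph in $H_i$ and in $G$, so $x \in X_i$ exactly when $x \in X$. The only remaining vertex is $u_i$: any induced $S(K_{1,3})$ of $H_i$ centred at $u_i$ either avoids $y_i$ --- and is then an induced copy in $G$ --- or uses the leg $u_i\,c\,y_i$, in which case replacing $y_i$ by the neighbour $u_j$ of $c$ in some other component yields an induced $S(K_{1,3})$ of $G$ centred at $u_i$, since no vertex of $V_i$ is adjacent to $u_j$. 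This shows $X_i \subseteq X$, and in particular that $u_i \in X_i$ forces $u_i \in X$, which is exactly what guarantees that every centre of $H_i$ can be matched.

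With this correspondence established, I would verify that $M_i$ meets the hypotheses of Theorem~\ref{main} for $H_i$: it is a matching of $H_i$, each of its edges joins a vertex of $X_i$ to a vertex of $V(H_i) \sm X_i$, and it covers all of $X_i$. For an interior centre $x \in X \cap (V_i \sm \{u_i\})$, its partner in the given matching is a neighbour of $x$ and so lies in $V_i$, and being the non-$X$ end it lies outside $X$, hence outside $X_i$; since $x \notin N(c) \cup \{c\}$, this edge lies in $M$ and hence in $M_i$. The delicate point is $u_i$: if $u_i \in X_i$ then $u_i \in X$, so $u_i$ is matched, and either its partner lies in $V_i$ (giving an edge of $M_i$ as above) or it lies outside $V_i$, in which case the construction adds $c\,u_i$; here I would use that $c \notin X_i$ and that no edge of the given matching incident with $c$ belongs to $M$ (such edges lie in $M_0$), so that $M_i$ remains a matching with the required incidence.

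Having checked the hypotheses, I would invoke the induction hypothesis for $H_i$ and conclude that $H_i^2$ has a $3$-trestle in which every vertex not covered by $M_i$ has degree $2$, which is exactly the statement of Claim~\ref{c1}. I expect the centre-correspondence at $u_i$ to be the crux: one must rule out a centre of $H_i$ that does not already appear in $G$ and would therefore escape the matching, and this is precisely where the auxiliary vertex $y_i$ together with the presence of at least two further components of $T - c$ is used.
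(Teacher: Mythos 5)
Your overall route is the paper's: apply the induction hypothesis to $H_i$ with the matching $M_i$, after showing that every centre of an induced $S(K_{1,3})$ in $H_i$ is a centre in $G$. However, your verification of this key correspondence rests on a false premise: you treat the components of $T - c$ as if they were components of $G - c$. The spanning tree controls only tree edges; two components of $T - c$ may lie in the same component of $G - c$ and be joined by non-tree edges of $G$ --- indeed $u_i$ and $u_j$ may themselves be adjacent in $G$. Consequently: (i) your replacement step, ``replacing $y_i$ by the neighbour $u_j$ of $c$ in some other component \dots\ since no vertex of $V_i$ is adjacent to $u_j$'', fails when $u_j$ is taken from another component of $T - c$, which is what your closing sentence says you rely on (``the presence of at least two further components of $T - c$''); (ii) the assertion that every vertex within distance $2$ of $x$ lies in $V_i \cup \{c\}$ is false in $G$, and with it the claimed equivalence $x \in X_i \Leftrightarrow x \in X$ --- though only the inclusion $X_i \subseteq X$ is needed, and that inclusion does survive, since a copy centred at $x \neq u_i$ cannot use $y_i$ and $G[V_i \cup \{c\}]$ is induced in $G$; (iii) ``its partner in the given matching is a neighbour of $x$ and so lies in $V_i$'' is a non sequitur: the partner lies in $V_i$ because the matching edge belongs to $M \subseteq E(T)$ and is therefore a tree edge --- this is precisely why the paper chose $T$ to contain all edges of $M$ --- not merely because it is a $G$-neighbour of $x$.

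The missing ingredient is the hypothesis you never invoke: $c$ is a \emph{cutvertex of $G$}, so $G - c$ (not merely $T - c$) is disconnected, and $c$ has a neighbour $u'$ in a component of $G - c$ disjoint from the one containing $V_i$; taking $u'$ as the replacement for $y_i$ makes your case analysis go through, since $u'$ is adjacent to nothing in $V_i$. The paper packages this in a single observation: mapping $y_i$ to such a vertex $u'$ exhibits $H_i$ as isomorphic to an induced subgraph of $G$, which yields both the $S(K_{1,4})$-freeness of $H_i$ and $X_i \subseteq X$ at once, with no case distinction on whether $y_i$ is used. With these two repairs --- the replacement vertex chosen in another component of $G - c$, and the explicit use of $M \subseteq E(T)$ to locate matching partners --- your argument becomes correct and is then essentially the paper's proof.
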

\begin{proofcl}[Proof of Claim~\ref{c1}]
We recall that $H_i$ can be viewed an induced subgraph of $G$,
and hence if a vertex is a centre of induced $S(K_{1,3})$ in $H_i$,
then it is a centre of induced $S(K_{1,3})$ in~$G$
(in particular, vertex $y_i$ has degree $1$ and $c$ has degree $2$ in $H_i$
and so none of them is a centre in $H_i$).
We observe that $H_i$ considered with the matching $M_i$
satisfies the assumptions of the theorem.
Since $H_i$ has fewer vertices than $G$,
we conclude that
$H_i^2$ has a desired $3$-trestle by the induction hypothesis.
\end{proofcl}

For every $i=1, \dots, k$,
we let $Z_i$ be a $3$-trestle of $H_i^2$ given by Claim~\ref{c1}.
We note that $Z_i$ contains the edges $cy_i$ and $u_iy_i$
(since $Z_i$ is $2$-connected and $y_i$ is only adjacent to $c$ and $u_i$ in $H_i^2$).
Also, we let $O_i$ denote the set of all vertices of $V_i$
which are adjacent to $c$ or $y_i$ in $Z_i$.
We note that $3 \geq |O_i| \geq 2$ (in particular, $u_i$ belongs to~$O_i$).
In addition, we let $R_i$ denote the graph $Z_i - \{c, y_i \}$.
We shall use the graphs $R_i$ for constructing a $3$-trestle of~$G^2$
(we view $O_i$ as the entry points of~$R_i$).
Preparing to show the $2$-connectivity of the desired construction,
we observe the following.
\begin{claim} 
\label{c2}
Let $r$ be a vertex of $R_i$ (where $1 \leq i \leq k$).
Then in the graph $R_i - r$,
every vertex is joint by a path to a vertex of $O_i \sm \{r\}$
(possibly a trivial path if the vertex itself belongs to $O_i \sm \{r\}$).
\end{claim}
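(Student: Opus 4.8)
The plan is to deduce the claim from the $2$-connectivity of $Z_i$. Since $Z_i$ is a $3$-trestle of $H_i^2$, it is $2$-connected, and since $r$ is a vertex of $R_i = Z_i - \{c, y_i\}$ we have $r \neq c$ and $r \neq y_i$. Removing the single vertex $r$ from the $2$-connected graph $Z_i$ therefore leaves a connected graph $Z_i - r$. I would fix an arbitrary vertex $z$ of $R_i - r$ and aim to exhibit a path from $z$ to a vertex of $O_i \sm \{r\}$ that lies entirely inside $R_i - r = Z_i - \{c, y_i, r\}$; a trivial path is allowed when $z$ itself already belongs to $O_i \sm \{r\}$.

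To produce such a path, I would take any path $P$ from $z$ to $c$ in the connected graph $Z_i - r$ (note $z$ and $c$ both survive in $Z_i - r$) and follow it until it first meets the set $\{c, y_i\}$, say at a vertex $p$. By the choice of the first meeting point, every vertex of $P$ strictly preceding $p$ lies in $V_i$; in particular the vertex $q$ immediately before $p$ lies in $V_i$ and is adjacent in $Z_i$ to $p \in \{c, y_i\}$. By the very definition of $O_i$ as the set of vertices of $V_i$ adjacent to $c$ or $y_i$ in $Z_i$, this forces $q \in O_i$, and $q \neq r$ because $P$ avoids $r$. Hence $q \in O_i \sm \{r\}$.

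Finally, I would observe that the initial segment of $P$ from $z$ to $q$ uses only vertices of $V_i$ (none of them being $c$ or $y_i$) and avoids $r$, so it is a path of $R_i - r$ joining $z$ to the vertex $q$ of $O_i \sm \{r\}$. As $z$ was arbitrary, this establishes the claim. I do not expect a genuine obstacle here; the only point requiring care is the bookkeeping of admissible vertices, namely the verification that, up to the moment $P$ first reaches $\{c, y_i\}$, it remains inside $V_i$, which is exactly what guarantees both that its last vertex before $\{c, y_i\}$ is an entry point of $O_i$ and that the traced initial segment survives in $R_i - r$.
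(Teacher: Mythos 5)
Your proof is correct and follows essentially the same route as the paper: both arguments use the $2$-connectivity of $Z_i$ to obtain a path from an arbitrary vertex to $c$ in $Z_i - r$, and then truncate it at the last vertex of $V_i$ before it first hits $\{c, y_i\}$, which by the definition of $O_i$ is a vertex of $O_i \sm \{r\}$. Your write-up merely makes explicit the bookkeeping (the first meeting point $p$, its predecessor $q$) that the paper leaves implicit.
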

\begin{proofcl}[Proof of Claim~\ref{c2}]
We note that every vertex of $V_i \sm \{r\}$
is joint by a path to $c$ in the graph  $Z_i - r$
(since $Z_i$ is $2$-connected), and 
for every vertex of $V_i \sm (O_i \cup \{r \})$
this path goes through a vertex of $O_i \sm \{r\}$.
The existence of a desired path in $R_i - r$ follows.
\end{proofcl}
We let $w_i$ denote a vertex of $O_i$ distinct from $u_i$.
We recall that $3 \geq |O_i| \geq 2$,
and so $O_i$ consists of $u_i$ and $w_i$ and possibly one additional vertex.

For every $i=1, \dots, k$ such that $|O_i| = 3$, we define edge $e_i$ as follows.
We observe that if $|O_i| = 3$, then $c$ is of degree $3$ in $Z_i$;
and by Claim~\ref{c1} we get that $c$ is incident with an edge of $M_i$,
and it is, in fact, the edge $cu_i$.
In particular, $u_i$ is a centre of induced $S(K_{1,3})$ in $H_i$,
and the vertex originally matched with $u_i$ does not belong to $V_i$
(the vertices are joint by an edge of $M_0$);
we say that the vertex is \emph{engaged} with $u_i$.
We let $e_i$ denote the edge joining the third vertex of $O_i$
to the engaged vertex
(possibly to $c$ if $c$ is engaged with $u_i$).

We note that no engaged vertex is a centre of induced $S(K_{1,3})$ in $G$,
and each engaged vertex belongs to $N(c) \cup \{c \}$
and is incident with only one of the edges $e_i$,
and that $e_i$ is an edge of~$G^2$.

We let $A$ be the subgraph of $G^2$
induced by $N(c) \cup \{w_1, \dots, w_k \}$. 
Furthermore, if $c$ is a centre of induced $S(K_{1,3})$ in $G$,
then we let $a$ denote the vertex matched with $c$ in~$M_0$.
We shall need particular paths in $A$ for interconnecting the graphs~$R_i$.
To this end, we show the following.

\begin{claim} 
\label{c3}
The graph $A$ has a spanning linear forest with the following properties.
\begin{enumerate}
\item
It contains all edges $u_i w_i$.
\item
It has at most three components; and if it has precisely three components,
then $c$ is a centre of induced $S(K_{1,3})$ in $G$.
\item
Every component has at least two vertices
and it has an end belonging to~$N(c)$;
and if there are precisely three components,
then one of these ends is the vertex $a$.

\end{enumerate}
\end{claim}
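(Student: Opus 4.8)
The plan rests on the following picture of $A$. The set $N(c)$ induces a clique in $G^2$, since any two neighbours of $c$ are joined through $c$; each $w_i$, lying at distance~$2$ from $c$ in $H_i$, is a neighbour of $u_i$ in $G$ (as $w_i\in O_i\sm\{u_i\}$ and $u_i$ is the only neighbour of $c$ in $V_i$), so $u_iw_i\in E(A)$; and because $c$ is a cutvertex, every vertex of $A$ at distance at most~$2$ in $G$ from a given $w_i$ lies in the component of $G-c$ that contains $u_i$. Thus, fixing all edges $u_iw_i$ settles property~(1) and forces each $w_i$ to be an endpoint of its path unless it carries a second edge of the forest.

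First I would work with the branches rather than with $A$ directly, because the independence number of $A$ can exceed~$3$ (e.g.\ when two components of $G-c$ each carry two adjacent roots), so a naive path cover is too coarse. Call branches $i\ne j$ \emph{linkable} if $A$ contains an edge between $\{u_i,w_i\}$ and $\{u_j,w_j\}$ other than the clique edge $u_iu_j$; that is, if $w_iw_j$, $u_iw_j$, or $w_iu_j\in E(A)$. The crucial observation is that two \emph{non}-linkable branches are $S$-compatible: from $\mathrm{dist}_G(u_i,w_j)\ge 3$ one gets $u_i\not\sim u_j$, and together with $\mathrm{dist}_G(w_i,w_j)\ge 3$ and $\mathrm{dist}_G(w_i,u_j)\ge 3$ this shows that $c,u_i,w_i,u_j,w_j$ induce a copy of $S(K_{1,2})$ centred at $c$. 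Hence a set of pairwise non-linkable branches, with $c$, induces a star $S(K_{1,t})$, and $S(K_{1,4})$-freeness forces $t\le 3$.

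Next I would organise the branches into few paths. Let $W$ be the graph on the branches whose edges are the linkable pairs; by the previous step its independence number is at most~$3$. Applying Lemma~\ref{linearForest} to $W$ (with $I$ a maximum non-linkable set) produces a spanning linear forest of $W$ with at most three components, i.e.\ a cover of the branches by at most three chains $i_1i_2\cdots i_r$ of consecutively linkable branches in which the chosen non-linkable branches appear only as chain-ends. I would expand each chain into a single path of $A$ running $w_{i_1}-u_{i_1}-\cdots-u_{i_r}$: at each step the link carries the next branch in, keeping every intermediate $w_{i_l}$ internal and every fixed edge $u_{i_l}w_{i_l}$ present, and I exploit the clique on the roots whenever a link clusters at one endpoint. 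This gives each path a root, hence an $N(c)$-end, and at least two vertices; the trivial neighbours of $c$ are threaded onto one path along clique edges. Since the number of paths is at most the independence of $W$, there are at most three (property~(2)); and if there are exactly three, that independence equals~$3$, so three pairwise non-linkable branches exist and $c$ is the centre of an induced $S(K_{1,3})$.

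The step I expect to be the main obstacle is the endpoint bookkeeping when there are exactly three paths, and specifically installing the matched vertex~$a$ as one end. Here the three chain-end branches are pairwise non-linkable, so their roots are the subdivision vertices of a witnessing $S(K_{1,3})$ at $c$. One then has to reconcile these combinatorially chosen branches with the externally given $a\in N(c)$: if $a$ is a trivial neighbour of $c$ it is absorbed so that a path terminates at $a$, while if $a=u_j$ is a root one must argue, using that $a\notin X$ forces branch~$j$ to be simple (so $|O_j|=2$), that branch~$j$ may be taken among the pairwise non-linkable branches and reordered to a chain-end. Verifying that every individual link expands while respecting the degree-$2$ bound, and that $a$ lands at an end simultaneously, is routine but is the part that demands the most care.
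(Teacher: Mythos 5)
Your branch framework and the independence bound are sound and parallel the paper: pairwise non-linkable branches together with $c$ induce a copy of $S(K_{1,t})$ centred at $c$, so $S(K_{1,4})$-freeness gives $t\le 3$, and independence $3$ certifies the $S(K_{1,3})$ centre needed in property~(2). The genuine gap is the expansion step. You define linkability of branches $i,j$ by the existence of a \emph{single} $G^2$-edge between $\{u_i,w_i\}$ and $\{u_j,w_j\}$ other than $u_iu_j$, and then assert that every chain of consecutively linkable branches expands into one path of $A$ containing all edges $u_{i_l}w_{i_l}$. This is false: since each pair must appear consecutively, entering a pair at one vertex forces exiting at the other. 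Take three branches where the only link between pairs $1,2$ is $w_1u_2$ and the only link between pairs $2,3$ is $u_2w_3$ (realizable when the links come from $G$-paths through vertices outside the pairs, e.g.\ edges $cu_1,cu_2,cu_3,u_1w_1,u_2w_2,u_3w_3,w_1z,zu_2,u_2z',z'w_3$). Then from either direction every traversal must enter pair $2$ at $u_2$ (via the clique edge or via the unique link), hence exit at $w_2$, and $w_2$ has no $G^2$-neighbour in the adjacent pair: the chain $1$--$2$--$3$ cannot be threaded in either order, even though it is a legitimate output of Lemma~\ref{linearForest} applied to your linkability graph. Once a chain must be split, your count ``number of paths $\le$ independence of the linkability graph'' breaks, and with it the implication ``three components $\Rightarrow$ independence $3$ $\Rightarrow$ $c$ is a centre of induced $S(K_{1,3})$'', which is exactly what property~(2) rests on.

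The paper avoids this by building the auxiliary graph from $G$-edges rather than $G^2$-edges: it contracts each edge $u_iw_i$ in the subgraph of $G$ induced by $W$, obtaining $C$. The payoff is a two-sided adjacency property your linkability lacks: if $c_i$ is adjacent to $c_j$ in $C$, i.e.\ some $G$-edge joins the pairs, then \emph{each} vertex of $\{u_i,w_i\}$ has a $G^2$-neighbour in $\{u_j,w_j\}$ (because $u_iw_i$ and $u_jw_j$ are $G$-edges). With that, a greedy choice of orientations never gets stuck, every path of the cover of $C$ expands into a path of $B_2$, and the ends can be steered into $N(c)$; your independence argument transfers intact, since $G^2$-nonadjacency implies $G$-nonadjacency, so $C$ also has independence at most $3$. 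Your handling of $a$ has a related defect: if $a=u_j$, the branch of $a$ need not be non-linkable to the others, so you cannot in general ``take branch $j$ among the pairwise non-linkable branches''; the paper's clean device is to apply Lemma~\ref{linearForest} to $C$ with the singleton independent set $\{c_j\}$, which forces $c_j$ to be a path end of the cover, and then to expand that pair as $a\,w_j$ (with the symmetric precaution, when appending the clique path $P$ on $V(A)\setminus W$, of never swallowing the end $a$).
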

\begin{proofcl}[Proof of Claim~\ref{c3}]
We let $W$ denote the set consisting of
all vertices $u_i, w_i$ such that $w_i$ does not belong to $N(c)$.
We can assume that $W$ is non-empty
(otherwise $A$ is a complete graph and the claim follows).
We let $B_1, B_2$ denote the subgraph induced by $W$ in $G$, $G^2$, respectively 
(clearly, $B_2$ is an induced subgraph of $A$).

First, we contract all edges $u_i w_i$ of $B_1$,
and we let $C$ denote the resulting graph
and $c_i$ denote its vertex corresponding to $u_i w_i$
(for each edge $u_i w_i$).
We note that if $C$ is of independence $3$, then
$c$ is a centre of induced $S(K_{1,3})$ in~$G$.
Similarly, we get that $C$ is of independence at most $3$
(since $G$ is $S(K_{1,4})$-free).
We consider a spanning linear forest of $C$ given by Lemma~\ref{linearForest}.
In particular, if $a$ exists and belongs to $W$ (say $a$ is labeled as $u_i$), then
we consider a forest such that the corresponding vertex $c_i$ is of degree at most $1$;
such a forest can be obtained by applying Lemma~\ref{linearForest}
to graph $C$ and independent set $\{ c_i \}$.

Next, 
we expand each of the path-components of the forest to a path in $B_2$ as follows.
We observe that if $c_i$ is adjacent to $c_j$, then
each vertex of $\{ u_i, w_i \}$ is adjacent to at least
one of $\{ u_j, w_j \}$ in~$B_2$. 
For every path-component,
we replace each vertex $c_i$ by the pair $u_i w_i$ (or $w_i u_i$)
so that the resulting sequence is a path in $B_2$ whose end belongs to $N(c)$;
in particular, if $a$ exists and belongs to $W$, then
we can ensure that $a$ is an end of one of the paths
(by starting with the corresponding vertex $c_i$
and replacing it with the pair $a w_i$).
We let $F_{B_2}$ denote the resulting spanning linear forest of $B_2$.

Finally, we consider the vertices of $V(A) \sm W$.
If this set is empty, then we observe that $F_{B_2}$ satisfies the claim.
Otherwise, we use the fact that all vertices of $N(c)$
are pairwise adjacent in $A$,
and we take a path $P$ consisting of all vertices of $V(A) \sm W$ such that
it contains all remaining edges $u_i w_i$ (which are not included in $F_{B_2}$);
furthermore if $a$ exists and belongs to $V(A) \sm W$,
then we take $P$ so that $a$ is an end of $P$.
We extend $F_{B_2}$ by adding $P$ as follows.
We choose a path-component of $F_{B_2}$
and its end belonging to $N(c)$ and an end of $P$,
and we join the paths by adding the edge connecting the chosen ends;
in particular, if $F_{B_2}$ has three components, then 
we ensure that $a$ is an end of a path-component in the resulting graph
(by choosing a component not containing $a$ and an end of $P$ distinct from $a$).
We conclude that the resulting graph is a spanning linear forest of $A$
satisfying properties (1), (2) and (3).
\end{proofcl}

We let $F$ be a forest given by Claim~\ref{c3},
and we extend it as follows.
We recall that each path-component of $F$ 
is non-trivial by property~(3), that is, it has two ends.
We let $L$ be the set given by property~(3)
consisting of precisely one end of each path,
and $L'$ be the set of the other ends
(clearly, $|L|$ is equal to the number of components of $F$).
We enhance $F$ by adding vertex $c$
and adding all edges joining $c$ to the vertices of $L'$,
and we finish the extension by discussing three cases based on $|L|$.
\begin{itemize}
\item
If $|L| = 1$, then we add the edge joining $c$ to the vertex of $L$ 
(and we note that the obtained graph is a cycle).
\item
If $|L| = 2$, then we add the edge joining the two vertices of $L$
(obtaining a cycle).
\item
Otherwise, we have $|L| = 3$ (by property (2) of Claim~\ref{c3}).
We use that $a$ belongs to $L$ by property (3),
and we add the two edges joining $a$ to the vertices of $L \sm \{a \}$
(and we obtain a graph consisting of three internally disjoint paths
from $a$ to $c$).
\end{itemize}
We let $\Theta$ denote the resulting extension of $F$,
and we note that $\Theta$ is a $2$-connected subgraph of $G^2$
on the vertex set $V(A) \cup \{c \}$.
Finally, we use property (1) of Claim~\ref{c3} and expand $\Theta$ as follows.
For every $i=1, \dots, k$ in sequence, we remove the edge $u_i w_i$ (of $\Theta$)
and take the union of the graph on hand with the graph $R_i$,
and if $|O_i| = 3$, then we add the edge~$e_i$;
and we let $Z$ denote the resulting graph.

We note that $Z$ is a spanning subgraph of $G^2$.
In order to verify that $Z$ is $2$-connected,
we consider removing an arbitrary vertex of $Z$
and we observe that the obtained graph is connected 
(in particular, we view the construction as interconnecting the graphs $R_i$
and use Claim~\ref{c2} and the fact that $\Theta$ is $2$-connected).
Lastly, we discuss the degrees of vertices in $Z$.
We note that vertex $c$ is of degree at most $3$ in $Z$;
and if it has degree $3$, then either
it is engaged with some vertex $u_i$
or it is a centre of induced $S(K_{1,3})$ in $G$.
Also,
we note that vertex $a$ (if it exists) is of degree at most $3$ in $Z$.
We consider the vertices (distinct from $c$ and $a$)
belonging to none of the graphs $R_i$,
and we note that each such vertex has degree~$3$ in $Z$ if engaged,
and degree $2$ otherwise
(since it is of degree $2$ in $\Theta$).
It remains to discuss the vertices of the graphs $R_i$ (distinct from $a$).
We note that if such a vertex does not belong to $O_i$,
then it is not engaged and its degree in $Z$ is equal to its degree in~$Z_i$.
Furthermore, 
for every vertex of $O_i$ at least one edge of $Z_i$ is removed in the construction
and one edge of $\Theta$ is added;
and if such a vertex is engaged, then it is of the degree $2$ in $Z_i$ (by Claim~\ref{c1}) 
and one of the edges $e_i$ is added.
We conclude that every vertex of degree $3$ in $Z$ is covered by $M_0$ or $M$. 
It follows that $Z$ is a desired $3$-trestle of $G^2$.
\end{proof}

\begin{proof}[Proof of Theorem~\ref{tree}]
We show that (1) implies~(2).
We let $Z$ denote a given $k$-trestle in $T^2$.
We consider an arbitrary vertex, say $x$, of $T$
and let $U$ denote the set of all its neighbours in $T$,
and $N$ denote the graph induced by $U$ in $Z$.
Since $T$ is a tree and $Z$ is a $k$-trestle of $T^2$,
we have that $n(x) \leq k$ and that $N$ is connected
(we view trivial graph as connected).

In particular, $N$ has at least $|U|-1$ edges,
that is, the sum of degrees of its vertices
is at least $2|U|-2$.
We consider the arcs from $U$ to $x$ in the symmetric orientation of~$T$,
and we note that there exists an assignment with the following properties
(since $2|U|-2 \geq |U| + n(x) - 2$).
\begin{itemize}
\item
If $x$ is a leaf in $T$ (that is, $N$ is trivial), then
the arc from $U$ to $x$ is assigned~$0$.
\item
Otherwise,
for every vertex $u$ of $U$, the arc $ux$ 
is assigned a non-negative integer smaller than the degree of $u$ in $N$;
and the sum of the assigned values is equal to $\max \{0, n(x) - 2 \}$.
\end{itemize}

We consider such assignment for every vertex of $T$,
and we take the union of all these assignments.
We conclude that the resulting assignment satisfies
$i(v) = \max \{0, n(v) - 2 \}$
and
$o(v) \leq k - n(v)$ for every vertex $v$ of $T$.

We show that (2) implies (3)
by examining a hypothetical minimal counterexample to the implication.
We consider the smallest $k$ for which a counterexample exists,
and we let $T$ be a counterexample on the smallest number of vertices for this $k$.

By Theorem~\ref{main} and by the choice of $T$, we can assume that
there exists a vertex $x$ such that $n(x) \geq 3$,
and we let
$u_1, \dots, u_{n(x)}, u_{n(x)+1}, \dots, u_{\ell}$ denote its neighbours in $T$
so that $u_1, \dots, u_{n(x)}$ are non-leaves.
We let $a(u_j x)$, $a(x u_j)$ denote the integer
assigned to the arc $u_j x$, $x u_j$, respectively
(for every $j = 1, \dots, \ell$).
For every $j = 1, \dots, n(x)$,
we take the component of $T - x$ containing $u_j$
and extend it by adding vertices $x$ and $y_j$ and the edges $u_j x$ and $x y_j$;
and we let $T_j$ denote the resulting tree. 

We consider $T_j$ with the corresponding restriction of the assignment
for its symmetric orientation
(assigning $0$ to the arcs $u_j x$, $x y_j$ and $y_j x$).
By the choice of~$T$, 
there exists a $k$-trestle of $T_j^2$
and the degrees of its vertices correspond to the restricted assignment;
and we let $Z_j$ denote such $k$-trestle.
In particular, $y_j$ has degree $2$ 
(it is adjacent to $x$ and $u_j$)
and $x$ has degree $a(x u_j) + 2$
and $u_j$ has degree $o(u_j) - a(u_j x) + \max \{2, n(u_j)\}$
(where $o(u_j)$ refers to the original assignment in~$T$).

Also, we consider a graph 
on the vertex set $\{ u_1, \dots, u_{\ell} \}$
such that for every $j = 1, \dots, \ell$,
the vertex $u_j$ has degree $a(u_j x) + 1$ if $j \leq n(x)$,
and degree $a(u_j x) + 2$ otherwise;
and we note that the sum of degrees is $2\ell - 2$
(since $i(x) = n(x) - 2$).
In particular, there exists such a graph which is a tree, say $T_U$.

Finally, we take the union of all graphs $Z_j - y_j$ 
(in this union,
$x$ is of degree $o(x) + n(x)$, and 
for every $j = 1, \dots, n(x)$,
the degree $u_j$ is $o(u_j) - a(u_j x) + \max \{2, n(u_j)\} - 1$),
and we extend this graph by adding the vertices $u_{n(x)+1}, \dots, u_{\ell}$ 
and adding all edges of $T_U$.
We note that the resulting graph is a $k$-trestle of $T^2$
contradicting the choice of $T$.

Clearly, (3) implies (1) which concludes the proof.
\end{proof}

\begin{proof}[Proof of Corollary~\ref{coro}]
For the sake of simplicity, we say that 
a vertex is red if it is a centre of $S(K_{1,3})$ in $T$,
and it is black otherwise.

First, we suppose that $T^2$ has a $3$-trestle.
We apply Theorem~\ref{tree} and note that statement (2) yields that
$T$ is $S(K_{1,4})$-free and $T$ has a matching covering all red vertices
such that each edge of the matching is incident with precisely one black vertex. 
We consider an arbitrary subtree $S$ of $T$ isomorphic to a tree of $\FF$,
and we note that its special vertices are coloured red in $T$ 
and some of them has to be matched with a vertex outside $S$
(since there are not sufficiently many black neighbours in $S$).
Thus, the degree of this special vertex is greater than $3$ in~$T$.

Next, we suppose that $T$ is $S(K_{1,4})$-free and the condition on subtrees is satisfied.
For the sake of a contradiction,
we suppose that $T^2$ has no $3$-trestle.
We consider the spanning bipartite subgraph $B$ of $T$ given by the red-black colouring
(that is, by deleting each edge whose ends have the same colour).
By Theorem~\ref{main}, we have that $B$ has no matching covering all red vertices.
We consider a minimal set $R$ of red vertices
violating the condition of Hall's theorem,
and we let $N(R)$ be the set of all black vertices adjacent to a vertex of $R$ in $B$.
We observe that the minimality implies that the vertices of $R$
are precisely the special vertices of a subtree of $T$ isomorphic to a tree of $\FF$.
Since $|R| > |N(R)|$ and $T$ is $S(K_{1,4})$-free,
we conclude that no vertex of $R$ has degree greater than $3$ in~$T$,
a~contradiction.
%
\end{proof}
\section*{Acknowledgements}
We thank the anonymous referees for their helpful comments and suggestions.
The work of the first author was supported by 
the MUNI Award in Science and Humanities of the Grant Agency of Masaryk University.
The work of the second author was supported by
the project GA20-09525S of the Czech Science Foundation.

\end{document}